\newtheorem{theorem}{Theorem}[section]
\newtheorem{lemma}[theorem]{Lemma}
\newtheorem{conjecture}[theorem]{Conjecture}
\begin{document}

\title{The Fundamental Morphism Theorem in the Categories of Graphs \& Graph Reconstruction}

\author{T. Chih
\and
D. Plessas}


\date{\today}

\maketitle

\begin{abstract}
The Fundamental Morphism Theorem is a categorical version of the First Noether Isomorphism Theorem for categories that do not have kernels or cokernels. We consider two categories of graphs. Both categories will admit graphs with multiple edges and loops, and are distinguished by allowing two different types of homomorphisms, standard graph homomorphisms and more general graph homomorphisms where the contraction of an edge is allowed. We establish the Fundamental Morphism Theorem in these two categories of graphs. We then use the result to provide an equivalent reformulation of the vertex and edge reconstruction conjectures. This reformulation shows that reconstructability  is equivalent to the existence of a graph homomorphism satisfying an equation.
\end{abstract}

Keywords: graph homomorphism, graph isomorphism, category of graphs, quotient graph,  reconstruction conjecture

\section{Introduction}\label{Intro}
We will follow the notations of \cite{Bondy} for graph theory, and in specific we use $\psi$ as the incidence function. The exception to this is that we will name graph homomorphisms as \emph{strict graph morphisms}. We use this terminology to separate strict graph morphisms from a more general graph homomorphism, \emph{graph morphisms}, where edges can be mapped to vertices provided incidence is still preserved. This is not the standard graph homomorphism \cite{HN2004}, but it has two natural advantages. First, it allows the contraction of an edge to be considered as a morphism, and second, it generalizes the morphisms often studied by category theorists when considering the category of directed graphs \cite{Goldblatt, Mac}.\par
The aim of this paper is to establish the Fundamental Morphism Theorem as in \cite{Lawvere} for the category of graphs with graph morphisms and the category of graphs with strict graph morphisms. In both categories we allow for graphs with multiple edges and loops. This result distinguishes these two categories, as the Fundamental Morphism Theorem often fails to hold in non-abelian categories. For example, the Fundamental Morphism Theorem fails to hold in the category of topological spaces and continuous functions. We also show that if you restrict the graphs to be simple in the graph morphism case, or simple with at most one loop allowed on a vertex in the strict graph morphism case, as in the standard category of graphs \cite{HN2004}, the Fundamental Morphism Theorem fails to hold.  \par
Once the Fundamental Morphism Theorem is established, in section \ref{ERC} we apply it to provide a reformulation of the vertex and edge reconstruction conjectures \cite{Harary}. In these reformulations, we only conjecture the existence of an epimorphism that satisfies a single graph homomorphism equation.

\subsection{Categorical Constructions}\label{Prelim}
To aid in a formal definition of a graph morphism, we define the \emph{part set} of a graph $G$ to be $P(G)=E(G)\cup V(G)$. Given two graphs $G$ and $H$, a \emph{graph morphism} $f:G\rightarrow H$ is a function $f_{P}:P(G) \rightarrow P(H)$ with $f_{V}=f_P|_{V(G)}:V(G) \rightarrow V(H)$ that preserves incidence, i.e. $\psi_{H}(f_{P}(e))=\{f_{V}(x),f_{V}(y)\}$ whenever $\psi_{G}(e)=\{x,y\}$, for all $e\in P(G)$ and some $x,y \in V(G)$. As $f_V$ is a restriction of $f_P$, for $p\in P(G)$ we will often write $f(p)$ instead of $f_P(p)$. In this definition of morphism, edges can be mapped to vertices as long as incidence is preserved. If we add the restriction that edges must be mapped to edges, we call the resulting morphism a \emph{strict graph morphism} or strict morphism.\par
We assume the reader is familiar with epimorphisms and momomorphisms. However, for the proof of the result we include the definition for a special type of epimorphism. A morphism $f:A\rightarrow B$ is an \emph{extremal epimorphism} if $f$ does not factor through any proper monomorphism, i.e. if $f=me$ with $m$ a monomorphism and $e$ an epimorphism, then $m$ is an isomorphism \cite{JoyofCats}.\par 
We are concerned with four categories of graphs. We call the category of all graphs with all graph morphisms \textbf{Grphs}, the category of all graphs with strict graph morphisms \textbf{StGrphs}, and the category of simple graphs with all graph morphisms \textbf{SiGrphs}. When we restrict the allowed graphs for the category using strict graph morphisms, we will use simple graphs where at most a single loop is allowed on each vertex. This category will be denoted \textbf{SLStGrphs}, and is the standard category of graphs \cite{HN2004}. K.K. Williams developed versions of the three Noether Isomorphism Theorems for \textbf{Grphs} via a concretely defined quotient graph \cite{KKWil}.\par
We now turn to the required categorical constructions in the four categories of graphs. Proofs that these constructions satisfy the categorical universal mapping properties are straight-forward, and more details can be found in \cite{Plessas}.\par
Given two graphs $A$ and $B$ in \textbf{Grphs}, the categorical product is an generalization of the strong product of graphs where we define $A\times B$ by $V(A\times B)=V(A)\times V(B)$ and for $e\in P(A)$ with $\psi_A(e)=\{a_1,a_2\}$ and $f\in P(B)$ with $\psi_B(f)=\{b_1,b_2\}$ there is an element $(e,f)\in P(A\times B)$ with $\psi_{A\times B}\{(e,f)\}=\{(a_1,b_1),(a_2,b_2)\}$ and if $a_1\neq a_2$ and $b_1\neq b_2$, there is another element $\overline{(e,f)}\in P(A\times B)$ with $\psi_{A\times B}(\overline{(e,f)})=\{(a_1,b_2),(a_2,b_1)\}$ that has the same projections as $(e,f)$. In \textbf{SiGrphs} the categorical product is exactly the strong product. In \textbf{StGrphs} and \textbf{SLStGrphs} the categorical product is the tensor product of graphs, but for our purposes we can follow the construction of \textbf{Grphs} but delete all pairs $(e,f)$ if exactly one of $e$ or $f$ is a vertex.\par
In all four categories of graphs the \emph{coproduct}, $A+B$, of two graphs $A$ and $B$ is the disjoint union of the two graphs, and the \emph{equalizer}, $q=eq(f,g)$, of two morphism $f,g:A\rightarrow B$ is the inclusion morphism of the subgraph $Eq$ of $A$ defined by $P(Eq)=\{ a\in P(A)| f(a)=g(a)$ and if $\psi_A(a)=\{a_1,a_2\}$ then $f(a_1)=g(a_1)$ and $f(a_2)=g(a_2)\}$. The incidence condition ensures that an edge is included in the equalizer only if the incident vertices are as well.\par
In \textbf{Grphs} and \textbf{StGrphs} the \emph{coequalizer}, $coeq(f,g)$, of two morphism $f,g:A\rightarrow B$ is the natural quotient morphism from $B$ to $Coeq$ defined by $P(Coeq)=P(B)/\sim$ where $\sim$ is the equivalence relation defined by $a\sim b$ if there is a sequence $a_0,a_1,\dots,a_n \in P(A)$ such that $a=f(a_0), g(a_0)=f(a_1), g(a_1)=f(a_2), \dots, g(a_{n-1})=f(a_n)$ and $b=f(a_n)$ or $b=g(a_n)$, where if an edge is identified with a vertex, the result is a vertex in $Coeq$.\par
In \textbf{SLStGrphs} we follow the same construction for the coequalizer but we also identify any parallel edges to a single edge and any multiple loops to a single loop, and in \textbf{SiGrphs} we also identify any loops to their incident vertex.\par
In a category with products, coproducts, equalizers, and coequalizers, for a morphism $f:A\rightarrow B$ we can form the following construction,
\begin{equation}\label{eq:FMT} \xymatrix{ R_f\ar[r]^-k & A\times A\ar@<.5ex>[r]^-{p_0}\ar@<-.5ex>[r]_-{p_1} & A \ar[r]^-f\ar[d]_-q & B \ar@<.5ex>[r]^--{i_0}\ar@<-.5ex>[r]_-{i_1} & B+B \ar[r]^-{k^*} & R_f^*\\
 & & I\ar@{-->}[r]_-{\exists !h} & I^*\ar[u]_-{q^*} & & }
\end{equation}\par
where $k=eq(fp_0,fp_1)$, $q=coeq(p_0k,p_1k)$, $k^*=coeq(i_0f,i_1f)$, and $q^*=eq(k^*i_0,k^*i_1)$. This construction yields a unique morphism $h:I\rightarrow I^*$ which makes the diagram commute. We note that $ R_f$ is the kernel pair of $f$ and $R_f^*$ is the cokernel pair of $f$, and we present them in this form to aid in the concrete construction when in the graph categories.\par
The Fundamental Morphism Theorem asserts that $h:I\rightarrow I^*$ is an isomorphism. F.W. Lawvere has shown that the category of sets and functions satisfies the Fundamental Morphism Theorem \cite{Lawvere} which was then extended to the category of discrete topological spaces and continuous functions \cite{Schlomiuk}. The Fundamental Morphism Theorem does not hold in the category of all topological spaces and continuous functions, nor in the category of commutative rings with unit and ring homomorphisms. When the Fundamental Morphism Theorem holds, generalizations of the three Noether Isomorphism Theorems follow as corollaries.\par
We provide two examples of the Fundamental Morphism Theorem  construction (\ref{eq:FMT}) using graphs. In both examples we will consider including a graph of two isolated vertices $\overline{K_2}$ into $K_2$. In Figure \ref{fig:grphs} the construction is formed simultaneously in \textbf{Grphs} and \textbf{StGrphs}. In Figure \ref{fig:sigrphs} we form the same construction simultaneously in \textbf{SiGrphs} and \textbf{SLStGrphs}.  We note that in \textbf{Grphs} and \textbf{StGrphs} the edges in $B+B$ are not identified by $k^*$, and thus an edge is not included in $I^*$. As we add restrictions on the graphs in our categories, the coequalizer morphism identifies parallel edges and loops, and now the edges in $B+B$ are identified by $k^*$ and an edge is included in $I^*$.\par
\begin{figure}[h]
\centering \includegraphics[scale=.6]{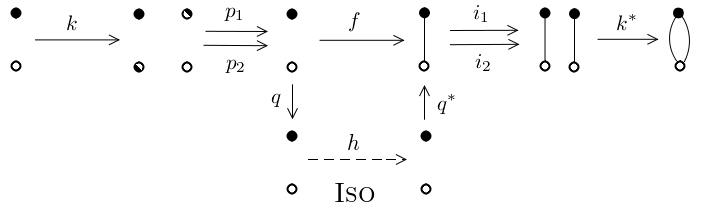}
\caption{An example of the Fundamental Morphism Theorem in \textbf{Grphs} and \textbf{StGrphs} \label{fig:grphs}}
\end{figure} 
\begin{figure}[h]
\centering \includegraphics[scale=.6]{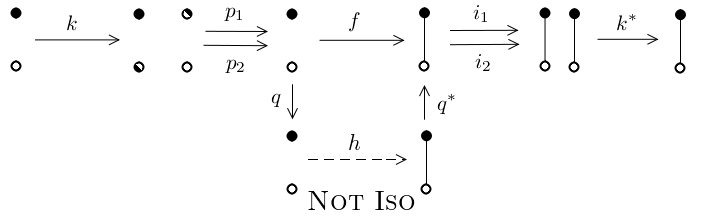}
\caption{A counterexample to the Fundamental Morphism Theorem in \textbf{SiGrphs} and \textbf{SLStGrphs} \label{fig:sigrphs}}
\end{figure}

\section{The Fundamental Morphism Theorem}\label{FMT}
We will establish the Fundamental Morphism Theorem in \textbf{Grphs} and \textbf{StGrphs} but we first need two lemmas concerning the properties of morphisms in these two categories.
\begin{lemma}\label{EpiInj} A morphism of \textbf{Grphs} is an epimorphism if and only if it is a surjective function on part sets, and a morphism of \textbf{Grphs} is a monomorphism if and only if it is an injective function of part sets. The same holds true in \textbf{StGrphs}.
\end{lemma}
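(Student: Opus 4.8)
The plan is to exploit the fact that both categories admit a faithful ``part set'' functor to \textbf{Set} sending $f$ to $f_P$, so that epimorphisms and monomorphisms can be detected on the underlying functions, supplemented by small representing objects. Since a morphism is completely determined by $f_P$ (the vertex part being the restriction $f_V = f_P|_{V(G)}$), the two sufficiency directions are immediate. If $f_P$ is surjective and $g_1 f = g_2 f$, then $(g_1)_P f_P = (g_2)_P f_P$ forces $(g_1)_P = (g_2)_P$ by right-cancellation of surjections in \textbf{Set}, hence $g_1 = g_2$ and $f$ is an epimorphism; dually, if $f_P$ is injective and $f g_1 = f g_2$, left-cancellation of injections gives $(g_1)_P = (g_2)_P$, so $f$ is a monomorphism. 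These arguments use nothing special about graphs and apply verbatim in \textbf{StGrphs}.

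For the two necessity directions I would argue by contrapositive, building explicit test morphisms. Suppose first that $f: G \to H$ is not injective on parts, so $f(a) = f(b)$ for distinct $a, b \in P(G)$. I would choose a domain $K$ that ``names'' a single part: the one-vertex graph $K_1$ when $a, b$ are both vertices, and a single-edge graph (an edge with its two endpoints, or a loop) when an edge is involved. Defining $g_1, g_2: K \to G$ to select $a$ and $b$ respectively, the incidence condition $\psi_H(f(e)) = \{f(x), f(y)\}$ guarantees that the endpoint images agree, so $f g_1 = f g_2$ while $g_1 \neq g_2$; hence $f$ is not mono. The only delicate case is the mixed one in \textbf{Grphs} where $a$ is a vertex and $b$ an edge with $f(b) = f(a) \in V(H)$: here incidence forces both endpoints of $b$ to map to $f(a)$, so the constant map to $a$ and the edge-selecting map become equalized by $f$. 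This mixed case never arises in \textbf{StGrphs}, since strict morphisms cannot identify an edge with a vertex.

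For the epimorphism direction, suppose $f_P$ is not surjective, so some part $p \in P(H)$ lies outside $f(P(G))$. Here I would invoke the cokernel pair already appearing in construction (\ref{eq:FMT}), namely the pushout $K = H +_G H$ realized as $coeq(i_0 f, i_1 f)$ with its two structure maps $i_0, i_1: H \to K$. By definition $i_0 f = i_1 f$. Reading off the concrete coequalizer relation $\sim$ on $P(H+H)$, the only forced identifications are $i_0(f(a)) \sim i_1(f(a))$ for $a \in P(G)$; since the two copies of $H$ are disjoint in the coproduct, no chain can equate $i_0(p)$ with $i_1(p)$ when $p \notin f(P(G))$. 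Thus $i_0(p) \neq i_1(p)$, so $i_0 \neq i_1$ and $f$ is not epi. As the coproduct and coequalizer have the same description in \textbf{Grphs} and \textbf{StGrphs}, the argument transfers directly.

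The main obstacle I anticipate is the bookkeeping in the monomorphism construction: one must check that the edge-selecting maps are legitimate morphisms, with endpoints chosen compatibly with $\psi$, and, in the all-edges case, that the common edge image $f(a) = f(b)$ permits an ordering of the endpoints making $f g_1$ and $f g_2$ agree on both endpoints of the test edge. Verifying that $\sim$ introduces no spurious identification of $i_0(p)$ with $i_1(p)$ is comparatively routine once the disjointness of the two summands is observed, so the epimorphism direction should be clean.
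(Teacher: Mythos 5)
Your proposal is correct, and the monomorphism half follows the paper's own argument almost exactly (test morphisms out of $K_1$ and a single-edge graph, with the incidence condition guaranteeing the endpoint images agree; your explicit treatment of the mixed vertex/edge case in \textbf{Grphs} is a point the paper glosses over slightly). The epimorphism half, however, takes a genuinely different route. The paper argues by hand: given a part $e \in P(B)\setminus Im(f)$, it builds a graph $C$ by appending to $B$ a single duplicate $e'$ of $e$ (a twin vertex with the same neighbourhood, or a parallel edge), and separates $f$ with the inclusion $B \to C$ versus the map swapping $e$ for $e'$. You instead take the cokernel pair $coeq(i_0 f, i_1 f)$ and observe that the generated equivalence relation on $P(B+B)$ only identifies $i_0(p)$ with $i_1(p)$ for $p$ in the image, so the two structure maps disagree at any missed part. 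Your argument is sound --- the generating pairs are exactly $(i_0 f(a), i_1 f(a))$, and disjointness of the two summands prevents any chain from reaching $i_0(p)\sim i_1(p)$ for $p\notin Im(f)$ --- and it is essentially the same computation the paper carries out later, inside the proof of the Fundamental Morphism Theorem, when showing $k^*$ identifies $i_0(e)$ with $i_1(e)$ iff $e\in P(Im(f))$. What your version buys is uniformity (one construction handles vertices and edges at once) and reuse of machinery; what the paper's version buys is self-containment, since the explicit two-morphism construction does not lean on the concrete coequalizer description from the preliminaries. Note that you do not actually need the universal property of the coequalizer, only that the described quotient map is a morphism equalizing $i_0 f$ and $i_1 f$ while separating $i_0(p)$ from $i_1(p)$, so there is no circularity. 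One small caution on the mono side: in \textbf{StGrphs} you should fix $K_2$ (not a loop) as the test object when selecting a non-loop edge, since a strict morphism out of a loop cannot hit a non-loop edge; your phrasing leaves the choice ambiguous but the intent is clearly right.
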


\begin{proof}
In concrete categories surjections are always epimorphisms and injections are always monomorphisms. We must only prove the converses. So let $f:A\rightarrow B$ be an epimorphism in \textbf{Grphs}, and suppose $f$ is not surjective on part sets. Then there exists $e\in P(B)\backslash Im(f)$.\par
First suppose $e\in V(B)$. Construct the graph $C$ by appending a vertex $e^\prime$ to $B$ such that $e^\prime$ is adjacent to every vertex $e$ is adjacent to. By construction $B$ is a subgraph of $C$.\par
Now consider $i:B\rightarrow C$ the inclusion morphism and $g:B\rightarrow C$ defined the same as $i$ with the exception that $g(e)=e^\prime$ and for edge $s$ incident to $e$ set $g(s)$ to be the corresponding edge incident to $e^\prime$. This is clearly a morphism (actually it is strict). Then $if=gf$ but $i\neq g$, a contradiction to $f$ being an epimorphism.\par
Now suppose $e$ is an edge of $B$. Construct the graph $C$ by appending an edge $e^\prime$ to $B$ such that $e^\prime$ has the same incidence as $e$. Then by construction $B$ is a subgraph of $C$. Then consider $i:B\rightarrow C$ the inclusion morphism and $g:B\rightarrow C$ to be the same as $i$ except for $g(e)=e^\prime$. Then $g$ is a morphism. Then $if=gf$ but $i\neq g$, a contradiction to $f$ being an epimorphism. Hence epimorphisms in \textbf{Grphs} are surjective functions of the corresponding edge sets.\par
Now let $f:A\rightarrow B$ be an monomorphism in \textbf{Grphs}, and suppose $f$ is not injective. Then there exists $d,e\in P(A)$ such that $f(d)=f(e)$. If either $d$ or $e$ is an edge, then consider $g,h:K_2\rightarrow A$ where $g$ maps the edge to $d$, and the vertices of $K_2$ to the vertices incident to $d$ whereas $h$ maps the edge to $e$ and the vertices of $K_2$ to the vertices incident to $e$. Then as $f$ must preserve incidence, $fg=fh$ but $g\neq h$, a contradiction to $f$ being a monomorphism. If $d$ and $e$ are vertices, a similar contradiction is found for $j,k:K_1\rightarrow A$ where $j$ maps to $d$ and $k$ maps to $e$.\par
A similar proof applies to \textbf{StGrphs}.
\end{proof}
This lemma does not hold in \textbf{SLStGrphs} and in \textbf{SiGrphs}, where it is only required to be injective on the vertex sets to be a monomorphism and surjective on the vertex sets to be an epimorphism. This means that the inclusion of $\overline{K_2}$ into $K_2$ is an epimorphism in these two categories.\par
\begin{lemma}\label{balanced}
Both \textbf{Grphs} and \textbf{StGrphs} are balanced categories, i.e. if a morphism in these categories is an epimorphism and a monomorphism then it is an isomorphism.
\end{lemma}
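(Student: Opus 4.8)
The plan is to leverage Lemma~\ref{EpiInj} directly. If $f:A\rightarrow B$ is both a monomorphism and an epimorphism in \textbf{Grphs}, then by that lemma $f_P:P(A)\rightarrow P(B)$ is simultaneously injective and surjective, hence a bijection of part sets. The natural candidate for an inverse is the set-theoretic inverse $f_P^{-1}$, and the whole proof reduces to checking that $f_P^{-1}$ is itself a graph morphism, i.e.\ that it preserves incidence and restricts correctly to vertices.

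First I would verify that $f$ respects the partition of the part set into vertices and edges in both directions. By definition a morphism already sends $V(A)$ into $V(B)$; the content is to show that no edge of $A$ is carried to a vertex of $B$. Suppose an edge $e$ with $\psi_A(e)=\{x,y\}$ had $f(e)\in V(B)$. Reading the incidence condition $\psi_B(f(e))=\{f(x),f(y)\}$ against the fact that a vertex is incident only to itself forces $f(x)=f(y)=f(e)$, which contradicts injectivity of $f_P$, since $e,x,y$ are distinct parts (or $e\neq x$ in the loop case $x=y$). Hence $f$ carries edges to edges and vertices to vertices bijectively, so $f_P^{-1}$ likewise splits as a bijection $V(B)\rightarrow V(A)$ together with a bijection $E(B)\rightarrow E(A)$; in particular $f_P^{-1}$ qualifies as the underlying function of a candidate morphism $g:B\rightarrow A$ that sends vertices to vertices.

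Next I would check incidence-preservation of $g=f_P^{-1}$. Given $b\in P(B)$ with $\psi_B(b)=\{u,w\}$, write $e=f_P^{-1}(b)$ and $\psi_A(e)=\{x,y\}$. Incidence-preservation of $f$ gives $\{u,w\}=\psi_B(f(e))=\{f(x),f(y)\}$, and applying the bijection $f_V^{-1}$ yields $\{x,y\}=\{f_V^{-1}(u),f_V^{-1}(w)\}$, which is exactly the statement that $g$ preserves incidence. Thus $g$ is a genuine graph morphism, and since $g=f_P^{-1}$ as functions on part sets we have $fg=\mathrm{id}_B$ and $gf=\mathrm{id}_A$, so $f$ is an isomorphism.

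Finally, the argument for \textbf{StGrphs} is a specialization: strict morphisms carry edges to edges by definition, so the vertex/edge partition is respected automatically and the first step above becomes unnecessary, while the incidence-preservation computation is identical. The one place that needs care --- and the main obstacle --- is precisely that first step in \textbf{Grphs}, where one must use injectivity to forbid an edge from collapsing onto a vertex; without ruling this out, $f_P^{-1}$ would not even preserve the vertex/edge typing required to be a morphism, and the bijection on part sets alone would not suffice.
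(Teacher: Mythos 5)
Your proposal is correct and follows essentially the same route as the paper: invoke Lemma~\ref{EpiInj} to get a bijection on part sets, argue that injectivity forbids an edge from collapsing to a vertex (so the vertex/edge typing is respected in both directions), and then check that the set-theoretic inverse preserves incidence. The only difference is that you spell out explicitly the step the paper dismisses as ``monomorphisms are trivially strict morphisms,'' which is a welcome clarification rather than a departure.
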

\begin{proof}
Let $f:A\rightarrow B$ be both a monomorphism and an epimorphism in \textbf{Grphs}. Then by the previous lemma, $f:P(A)\rightarrow P(B)$ is a bijection and there is a set function $f^{-1}:P(B)\rightarrow P(A)$. It suffices to show $f^{-1}$ is a graph morphism.\par
As $f$ is a morphism, $f$ maps vertices to vertices, and as $f$ is a bijection, $f^{-1}$ maps vertices to vertices. Further as monomorphisms are trivially strict morphisms, both $f$ and $f^{-1}$ map edges to edges. Now let $e\in E(B)$ with $\psi_B(e)=\{b_1,b_2\}$ for some $b_1,b_2\in V(B)$, then there is an edge $e^\prime\in E(A)$ with $\psi_A(e^\prime)=\{a_1,a_2\}$ such that $f(e^\prime)=e$. Since $f$ is a morphism, incidence is preserved and $\{b_1,b_2\}=\{f(a_1),f(a_2)\}$. Hence $f^{-1}(e)=e^\prime$ and $\psi_A(f^{-1}(e))=\{f^{-1}(b_1),f^{-1}(b_2)\}=\{a_1,a_2\}=\psi_A(e^\prime)$. Hence incidence is preserved, and $f$ is an isomorphism.\par
As both $f$ and $f^{-1}$ are strict morphisms, the result also holds in \textbf{StGrphs}.
\end{proof}
We note that since \textbf{Grphs} and \textbf{StGrphs} are balanced, all epimorphisms are extremal epimorphisms in these two categories \cite{JoyofCats}. We also note that this lemma fails to hold in $\textbf{SiGrphs}$ and $\textbf{SLStGrphs}$ as the non-isomorphism inclusion of $\overline{K_2}$ into $K_2$ is both an epimorphism and monomorphism.\par
\begin{theorem}
In \textbf{Grphs} and \textbf{StGrphs} the unique morphism $h:I\rightarrow I^*$ in the construction given by (\ref{eq:FMT}) is an isomorphism.
\end{theorem}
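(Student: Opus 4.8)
The plan is to lean entirely on the two preceding lemmas: it suffices to show that $h$ is a bijection on part sets, since Lemma~\ref{EpiInj} would then make $h$ simultaneously a monomorphism and an epimorphism, and Lemma~\ref{balanced} would force it to be an isomorphism. So the strategy is to compute $I$ and $I^*$ concretely and identify both of their part sets with $Im(f)\subseteq P(B)$, after which $h$ reveals itself as the evident bijection. Conceptually this is just the uniqueness of the (extremal epi, mono)-factorization of $f$, but with the explicit characterizations of epis and monos in hand the concrete route is cleaner.

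First I would unwind the universal properties to pin down the induced maps. Since $k=eq(fp_0,fp_1)$ gives $fp_0k=fp_1k$, the morphism $f$ coequalizes $p_0k,p_1k$ and factors uniquely as $f=\overline{f}\,q$ with $\overline{f}\colon I\to B$. Dually, since $k^*=coeq(i_0f,i_1f)$ gives $k^*i_0f=k^*i_1f$, the morphism $f$ equalizes $k^*i_0,k^*i_1$ and factors uniquely as $f=q^*\widetilde{f}$ with $\widetilde{f}\colon A\to I^*$. The promised $h$ arises because $q^*$ is a monomorphism: from $q^*\widetilde{f}p_0k=fp_0k=fp_1k=q^*\widetilde{f}p_1k$ we get $\widetilde{f}p_0k=\widetilde{f}p_1k$, so $\widetilde{f}$ factors through the coequalizer $q$ as $\widetilde{f}=hq$, and then $q^*hq=f$ makes (\ref{eq:FMT}) commute.

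Next I would read off the explicit constructions. The kernel pair $R_f$ is the subgraph of $A\times A$ whose parts are pairs with equal $f$-image, so $q=coeq(p_0k,p_1k)$ identifies two parts of $A$ exactly when they share an image under $f$; hence $P(I)$ is in bijection with $Im(f)$ via $\overline{f}$, which is thereby injective on parts. Dually, $R_f^*$ is two copies of $B$ glued along $Im(f)$: the generating relation identifies $(b,0)$ with $(b,1)$ precisely when $b\in Im(f)$, and since every generating pair joins the two disjoint copies at one common point, the chain condition in the coequalizer definition cannot propagate and no further identifications occur. Thus $k^*i_0(b)=k^*i_1(b)$ if and only if $b\in Im(f)$, so $P(I^*)$ is likewise in bijection with $Im(f)$ via the inclusion $q^*$. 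I would stress here that in \textbf{Grphs} and \textbf{StGrphs} the coequalizer performs no extra identification of parallel edges or loops; this is exactly the feature that fails in \textbf{SiGrphs} and \textbf{SLStGrphs}, and it is what makes the theorem hold.

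Finally, from $q^*h=\overline{f}$ one sees that on part sets $h$ sends the class $[a]\in P(I)$ to $f(a)\in Im(f)=P(I^*)$, which is a bijection because the parts of $I$ are by construction the fibers of $f$ on $P(A)$. The one remaining check is that $h$ respects graph structure, namely that an edge-class maps to an edge when $f$ does not contract it and to a vertex when it does, and that incidence agrees; this follows since both $I$ and $I^*$ inherit their incidence and their vertex/edge typing from $B$ through $f$. I expect this last bookkeeping, tracking contracted edges through both the quotient $I$ and the subobject $I^*$ and confirming the types match, to be the only genuine obstacle, the categorical skeleton being routine. With $h$ bijective on part sets, Lemmas~\ref{EpiInj} and~\ref{balanced} close the argument.
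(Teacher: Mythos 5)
Your proposal is correct and follows essentially the same route as the paper: both arguments rest on the two concrete claims that $q$ identifies $a,b\in P(A)$ exactly when $f(a)=f(b)$ and that $k^*$ identifies $i_0(b),i_1(b)$ exactly when $b\in Im(f)$, so that $P(I)$ and $P(I^*)$ are both realized as $Im(f)$. The only cosmetic difference is the finish: the paper factors the extremal epimorphism $h^{\prime}=hq$ through the monomorphism $h$, while you conclude directly from bijectivity of $h$ on part sets via Lemma \ref{EpiInj} and Lemma \ref{balanced} --- the same ingredients either way.
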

\begin{proof}
Consider the construction in \textbf{Grphs}. We first follow part of the standard construction used to prove $h$ exists in order to define a morphism we will require later in the proof.\par
As $q^*=eq(k^*i_0,k^*i_1)$ and $k^*=coeq(i_0f,i_1f)$, $k^*i_0q^*=k^*i_1q^*$ and $k^*i_0f=k^*i_1f$. Then as $q^*$ is an equalizer there is a unique morphism $h^{\prime}:A\rightarrow I^*$ such that the following diagram commutes.
\begin{equation}\label{eq:hprimeprime}
\xymatrix{A \ar[r]^f \ar@{-->}[dr]_{\exists ! h^{\prime}} & B\\
				 & I^* \ar[u]_{q^*}}
\end{equation}\par
We now will prove the claim that $k^*=coeq(i_0f,i_1f)$ identifies parts $i_0(e)$ and $i_1(e)$ for $e\in P(B)$ if and only if $e\in P(Im(f))$.\par
First, let $v\in V(Im(f))$, then there is a vertex $u\in V(A)$ such that $v=f(u)$ for if $v$ is the image of an edge, then $v$ is also the image of the edge's incident vertices. Hence as $i_0f(u)=i_0(v)$ and $i_1f(u)=i_1(v)$, $k^*i_0(v)=k^*i_1(v)$. Then for $e\in E(Im(f))$ there is an edge $e^\prime\in E(A)$ with $f(e^\prime)=e$. Hence as $i_0f(e^\prime)=i_0(e)$ and $i_1f(e^\prime)=i_1(e)$, $k^*i_0(e)=k^*i_1(e)$.\par
We prove the converse holds by contrapositive. Assume that there exists $b\in P(B)\backslash P(Im(f))$. Then for all $a\in P(A)$, $f(a)\neq b$, and hence $i_0f(a)\neq i_0(b)$ and $i_1f(a)\neq i_1(b)$. Thus $i_0(b) \nsim i_1(b)$ and $k^*i_0(b)\neq k^*i_1(b)$ as there is no sequence formed in the construction of the coequalizer between $i_0(b)$ and  $i_1(b)$.\par
We now show that by our definition of equalizer $I^*= Im(f)$.\par 
Let $e\in P(I^*)$, then as $q^*=eq(k^*i_0,k^*i_1)$, $k^*i_0q^*(e)=k^*i_1q^*(e)$. As $q^*$ is inclusion, $k^*i_0(e)=k^*i_1(e)$, and so by our previous claim $e\in P(Im(f))$. Now let $e\in P(Im(f))$. Then by our previous claim, $k^*i_0(e)=k^*i_1(e)$. If $e\in E(Im(f))$ then so are the vertices incident to $e$. As $q^*$ is an equalizer, $e\in P(I)$. Hence as sets $P(I)=P(Im(f))$. Finally, as $q^*$ is a morphism, incidence is preserved and they are equal as graphs.\par
We will now prove the claim that $q=coeq(p_0k,p_1k)$ identifies $a,b\in P(A)$ if and only if $f(a)=f(b)$.\par
We first note that as $p_0((a,b))=a$ and $p_1((a,b))=b$, $P(R_f) = \{(a,b)\in P(A\times A)| f(a)=f(b)$ and if $\psi_{A\times A}((a,b))=\{(u_a,u_b),(v_a,v_b)\}$ then $f(u_a)=f(u_b)$ and $f(v_a)=f(v_b) \}$. So let $a,b\in P(A)$ be such that $q(a)=q(b)$. Then there is a sequence $a_1,a_2,\dots,a_n \in P(R_f)$ with $a=p_0k(a_1),$ $p_1k(a_1)=p_0k(a_2),$ $p_1k(a_2)=p_0k(a_3),$ $\dots,$ $p_1k(a_{n-1})=p_0k(a_n)$ and $b=p_0k(a_n)$ or $b=p_1k(a_n)$. As $k:R_f\rightarrow A\times A$ is inclusion, $a=p_0(a_1),$ $p_1(a_1)=p_0(a_2),$ $p_1(a_2)=p_0(a_3),$ $\dots,$ $p_1(a_{n-1})=p_0(a_n)$ and $b=p_0(a_n)$ or $b=p_1(a_n)$. Then since $a=p_0(a_1)$, $a_1=(a,c_1)$ for some $c_1 \in P(A)$. As $p_0(a_2)=p_1(a_1)$, $a_2=(c_1,c_2)$ for some $c_2\in P(A)$, and inductively $p_0(a_i)=p_1(a_{i-1})$ implies $a_i=(c_{i-1},c_i)$ for $3\leq i\leq n$. Then as $b=p_0(a_n)$ or $b=p_1(a_n)$, $b=c_{n-1}$ or $b=c_n$ respectively. Since $a_1,a_2,\dots,a_n\in P(R_f)$, the object of an equalizer, $f(a)=f(c_1), f(c_1)=f(c_2), \dots, f(c_{n-1})=f(c_n)$ and transitively $f(a)=f(b)$.\par
Conversely, let $a,b\in P(A)$ with $f(a)=f(b)$. We consider two cases.\par
First suppose one of $a$ or $b$ is a vertex or a loop, and without loss of generality, let $a$ be a vertex or a loop. Then $\psi_A(a)=\{u,u\}$ for some $u\in V(A)$. Let $\psi_A(b)=\{u_b,v_b\}$ for some $u_b,v_b\in V(A)$. Since $f(a)=f(b)$ and morphisms preserve incidence, $f(u)=f(u_b)=f(v_b)$. Thus $(a,b)\in P(R_f)$ and as $q=coeq(p_0k,p_1k)$, $p_1k((a,b))=b$ and $p_0k((a,b))=a$, $q(a)=q(b)$.\par
Now consider the case where $a$ and $b$ are non-loop edges. Let $\psi_A(a)=\{u_a,u_b\}$ and $\psi_A(b)=\{v_a,v_b\}$ for some $u_a,u_b,v_a,v_b\in V(A)$. Since $f(a)=f(b)$, $\{f(u_a),f(v_a)\}=\{f(u_b),f(v_b)\}$ and hence either $f(u_a)=f(u_b)$ and $f(v_a)=f(v_b)$ or $f(u_a)=f(v_b)$ and $f(u_b)=f(v_a)$. In the first case $(a,b)\in P(R_f)$ and in the second case $\overline{(a,b)}\in P(R_f)$. As $k$ is inclusion, $p_0((a,b))=p_0(\overline{(a,b)})=a$, and $p_1((a,b))=p_1(\overline{(a,b)})=b$, and $q(a)=q(b)$ as desired.\par
We can now show $h:I\rightarrow I^*$ is a monomorphism. Let $a,b\in V(I)$ with $a\neq b$. As $q$ is a coequalizer, $q$ is an epimorphism and by Lemma \ref{EpiInj} surjective on part sets. Hence there is $u,v\in P(A)$ such that $q(u)=a$ and $q(v)=b$. By the previous claim, as $a\neq b$, $f(u)\neq f(v)$. Then since $f=q^*hq$ and $q^*$ is inclusion, $h(a)=q^*h(a)=q^*hq(u)=f(u)\neq f(v)=q^*hq(v)=q^*h(b)=h(b)$, and $h$ is an injection on part sets. Then by Lemma \ref{EpiInj}, $h$ is a monomorphism.\par
We now show $h^{\prime}$ defined as in (\ref{eq:hprimeprime}) is an epimorphism (and by Lemma \ref{balanced} an extremal epimorphism).\par
Define $\overline{h}:A \rightarrow I^*$ by $\overline{h}(e)=f(e)$ for all $e\in P(A)$. As $Im(f)=I^*$ and $f$ is a morphism, $\overline{h}$ is well defined and a morphism. Since $q^*$ is inclusion, $q^*\overline{h}(a)=q^*(f(a))=f(a)$ for all $a\in P(A)$. Thus $q^*\overline{h}=f$. However, $h^{\prime}$ is the unique morphism such that $q^*h^{\prime}=f$. Therefore $\overline{h}=h^{\prime}$. As $\overline{h}$ is a surjection on part sets, so is $h^{\prime}$. Thus by Lemma \ref{EpiInj} $h^{\prime}$ is an epimorphism.\par
Finally, as $h^{\prime}=hq$ is a factorization of an extremal epimorphism through a monomorphism, $h$ is an isomorphism.\par
The proof for \textbf{StGrphs} follows similarly.
\end{proof}

\section{A Reformulation of Reconstruction Conjectures}\label{ERC}
We can leverage the Fundamental Morphism Theorem to establish isomorphisms in the same way the Noether Isomorphism Theorems are used in Algebra. We provide an example of this by giving a reformulation the Edge Reconstruction Conjecture of Harary \cite{Harary} in terms of establishing the existence of a morphism that satisfies a certain equation.\par
We first recall the Edge Reconstruction Conjecture in the form of isomorphic edge-decks. 
\begin{conjecture}[Edge Reconstruction Conjecture]
Let $G$ and $H$ be finite simple graphs with at least $4$ edges and single edge-deleted subgraphs $G_i$ and $H_i$ for $i\in I$ for some indexing set $I$, where there are isomorphisms $\gamma_i:G_i \to H_i$ for all $i\in I$, then $G$ is isomorphic to $H$.
\end{conjecture}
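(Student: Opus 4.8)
The plan is to extract from the matched edge-decks enough isomorphism-invariant counting data about $G$ to pin down its isomorphism type, following the classical enumerative approach to edge reconstruction. First I would record the cheap invariants: every deck card is a single-edge-deleted subgraph, so it has the same vertex set as its parent and exactly one fewer edge; hence the number of vertices $n$ and the number of edges $m=e(G)$ are common to $G$ and $H$, and I may fix a single vertex set $V$ with $|V|=n$ carrying both. The central tool is the edge version of Kelly's Lemma: for any fixed graph $F$ with $e(F)<m$, writing $s(F,X)$ for the number of subgraphs of $X$ isomorphic to $F$, the identity
\[
\sum_{e\in E(G)} s(F,G-e)=\bigl(m-e(F)\bigr)\,s(F,G)
\]
holds, since a fixed copy of $F$ survives in exactly the $m-e(F)$ cards obtained by deleting an edge outside it. The isomorphisms $\gamma_i$ say that $G$ and $H$ present the same multiset of cards, so the left-hand side agrees for $G$ and $H$; as $m$ and $e(F)$ are known, this forces $s(F,G)=s(F,H)$ for every $F$ with fewer than $m$ edges.

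Now $G$ and $H$ agree on all proper subgraph counts, so proving $G\cong H$ reduces to the single top-level equality $s(G,G)=s(G,H)$: a copy of $G$ inside $H$ necessarily uses all $m$ edges of $H$ and, once isolated vertices are accounted for, all of $V$, hence is $H$ itself, yielding $G\cong H$. To recover this self-count I would mimic the Lov\'asz--Nash-Williams strategy of an inclusion--exclusion (M\"obius) expansion over the subgraphs supported on $V$, writing the number of labeled copies of $G$ as a combination of the already-known counts $s(F,\cdot)$ over proper subgraphs $F$, together with a correction term that measures how two copies of $G$ can overlap. Whenever that correction term is itself computable from the lower counts, $s(G,G)=s(G,H)$ follows and we are finished.

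The hard part --- and the reason this remains a \emph{conjecture} rather than a theorem --- is precisely the recovery of $s(G,G)$, the one quantity Kelly's Lemma pointedly withholds. Every successful instance of the above strategy closes the gap only under a density hypothesis that confines the overlap count to a controllable range: Lov\'asz's argument works when $m>\frac{n(n-1)}{4}$, and M\"uller's refinement when $2^{m-1}>n!$. The hypothesis of at least four edges is of a completely different and far weaker character; it only deletes the finite list of small sporadic graphs on which edge reconstruction genuinely fails, and it does nothing to drive a sparse $G$ into the regime where the enumeration is controlled. I therefore expect the decisive obstruction to lie in the sparse regime, where no purely counting-based recovery of $s(G,G)$ is known; a complete proof would require either a new deck-visible invariant beyond subgraph counts or a structural criterion such as the morphism-equation reformulation developed below.
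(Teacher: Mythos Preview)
The statement you were asked to address is labelled \emph{Conjecture} in the paper, and the paper makes no attempt to prove it: it is quoted verbatim as Harary's Edge Reconstruction Conjecture and then \emph{reformulated} categorically in the subsequent theorem, not established. So there is no ``paper's own proof'' to compare your proposal against.

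Your write-up is appropriate to that situation. You correctly recognise that the statement is open, you accurately rehearse the standard enumerative machinery (edge Kelly's Lemma giving $s(F,G)=s(F,H)$ for all $F$ with $e(F)<m$, the reduction to the single unknown $s(G,G)$, and the Lov\'asz and M\"uller density thresholds under which that last count can be recovered), and you correctly locate the obstruction in the sparse regime where ``at least four edges'' gives no leverage. That is an honest and well-informed account of why no proof exists, not a proof attempt with a gap.

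Your closing sentence even anticipates what the paper actually does: it trades the problem for the existence of an epimorphism $\delta:\coprod_i G_i\to G$ satisfying $\delta p_0 k=\delta p_1 k$ in the Fundamental Morphism Theorem diagram. That reformulation is proved equivalent to edge-reconstructability, but it does not resolve the conjecture itself, and the paper does not claim otherwise.
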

As one of the most famous conjectures in graph theory, this conjecture has generated many remarkable results \cite{bollobas, muller}. Fantastic survey papers on the many approaches to the conjecture are available for the interested reader \cite{asciak, bondysurvey, lauri, maccari}.\par
Given graphs $G$ and $H$ as in our statement of the Edge Reconstruction Conjecture, we will apply the Fundamental Morphism Theorem from $\mathbf{StGrphs}$. Let $\displaystyle \coprod_{i\in I}G_i$ be the coproduct of the all of the edge-deleted subgraphs of $G$. As $H_i$ has canonical inclusion morphisms $\kappa_i:H_i\to H$ for all $i\in I$, there are morphisms $\kappa_i\gamma_i:G_i\to H$ for all $i\in I$. So by the universal mapping property of the coproduct, there is a morphism $\displaystyle \Gamma: \coprod_{i\in I}G_i\to H$. As we have at least two edge-deleted subgraphs, this morphism is surjective. Thus $\Gamma$ is an epimorphism by Lemma \ref{EpiInj}. We apply the Fundamental Morphism Theorem to $\Gamma$:
\begin{equation} \label{eq:EdgeFMT} \xymatrix{R_\Gamma\ar[r]^-k & \displaystyle\coprod_{i\in I}G_i\times \coprod_{i\in I}G_i\ar@<.5ex>[r]^-{p_0}\ar@<-.5ex>[r]_-{p_1} & \displaystyle\coprod_{i\in I}G_i \ar[r]^-\Gamma\ar[d]_-q & H \ar@<.5ex>[r]^-{i_0}\ar@<-.5ex>[r]_-{i_1} & H+H \ar[r]^-{k^*} & R_\Gamma^*\\
 & & I\ar@{-->}[r]_-{\exists !h} & I^*\ar[u]_-{q^*} & & }
\end{equation}
where $k=eq(\Gamma p_0,\Gamma p_1),$ $q=coeq(p_0k,p_1k),$ $k^*=coeq(i_0\Gamma,i_1\Gamma),$ $q^*=eq(k^*i_0,k^*i_1)$, and $h:I\to I^*$ is the unique isomorphism that makes the diagram commute.\par
Using the construction (\ref{eq:hprimeprime}) in the proof of the Fundamental Morphism Theorem and as $\Gamma$ is an epimorphism with $\Gamma=q^* h^\prime$, $q^*:I^*\to H$ is an epimorphism. Then by Lemma \ref{balanced} as $q^*$ is a monomorphism (it is an equalizer), it is an isomorphism. Hence $I$ is isomorphic to $H$.\par
If there exists an epimorphism $\displaystyle \delta:\coprod_{i\in I} G_i\rightarrow G$ such that $\delta p_0 k = \delta p_1 k$, then the universal mapping property of $I$ would yield a unique morphism $\Delta:I\rightarrow G$ where $\delta=\Delta q$. Then $\Delta$ must be an epimorphism as $\delta$ is. As $|V(I)|=|V(H)|=|V(G)|$ and $|E(I)|=|E(H)|=|E(G)|$ both finite, $\Delta$ is a bijection. Therefore by Lemma \ref{EpiInj} and Lemma \ref{balanced} $\Delta$ is an isomorphism, and $G$ is isomorphic to $H$.\par
Conversely, if the Edge Reconstruction Conjecture holds, such a $\delta$ exists by appending the isomorphism from $G$ to $H$ and from $H$ to $I$ to the morphism $q$. Thus, we establish a categorical reformulation of the Edge Reconstruction Conjecture:
\begin{theorem}[Categorical equivalence to Edge-Reconstructable]
Given the construction in (\ref{eq:EdgeFMT}), $G$ is edge-reconstructable if and only if there exists an epimorphism $\displaystyle \delta:\coprod_{i\in I}G_i \to G$ such that $\delta p_0 k=\delta p_1 k$.
\end{theorem}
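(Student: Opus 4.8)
The plan is to establish both implications by exploiting the universal property of the coequalizer $q=coeq(p_0k,p_1k)$ together with the facts, already established in the discussion preceding the statement, that $q^*$ is an isomorphism and $h$ is an isomorphism, so that $q^*h\colon I\to H$ is an isomorphism; in particular $|V(I)|=|V(H)|$ and $|E(I)|=|E(H)|$.

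For the backward implication, I would assume an epimorphism $\delta\colon\coprod_{i\in I}G_i\to G$ with $\delta p_0k=\delta p_1k$ is given. This equation is precisely the hypothesis needed to apply the universal mapping property of the coequalizer $q$, which I would invoke to obtain a unique morphism $\Delta\colon I\to G$ with $\delta=\Delta q$. Since $\delta$ is surjective on part sets and factors as $\Delta q$, the map $\Delta$ is itself surjective on part sets, hence an epimorphism by Lemma~\ref{EpiInj}. The remaining work is to upgrade $\Delta$ from an epimorphism to an isomorphism, which I would do by a counting argument: because $G$ and $H$ share an edge-deck they have the same finite numbers of vertices and edges, and since $I\cong H$ we obtain $|P(I)|=|P(G)|$; a surjection between finite part sets of equal cardinality is a bijection, so $\Delta$ is also injective on part sets, hence a monomorphism by Lemma~\ref{EpiInj} and therefore an isomorphism by Lemma~\ref{balanced}. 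Chaining $G\cong I\cong H$ then gives edge-reconstructability.

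For the forward implication, I would assume $G$ is edge-reconstructable, so that $G\cong H$, and fix an isomorphism $\psi\colon H\to G$. Setting $\delta=\psi\,(q^*h)\,q\colon\coprod_{i\in I}G_i\to G$ produces the required morphism: it is an epimorphism because it is the composite of the epimorphism $q$ with the two isomorphisms $q^*h$ and $\psi$, and it satisfies $\delta p_0k=\delta p_1k$ because $q p_0k=q p_1k$ holds by the very definition of $q$ as the coequalizer of $p_0k$ and $p_1k$, and this equality is preserved under postcomposition with $q^*h$ and $\psi$.

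I expect the only genuine subtlety to be the counting step in the backward implication, where one must confirm that $|V(I)|=|V(H)|=|V(G)|$ and $|E(I)|=|E(H)|=|E(G)|$ hold simultaneously; this rests both on the isomorphism $I\cong H$ coming from the Fundamental Morphism Theorem and on the observation that the edge-deck already determines the vertex and edge counts of $G$. Everything else reduces to a direct application of the universal property of the coequalizer and the characterizations of epimorphisms, monomorphisms, and isomorphisms furnished by Lemmas~\ref{EpiInj} and~\ref{balanced}.
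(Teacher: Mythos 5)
Your proposal is correct and follows essentially the same route as the paper: the backward direction uses the universal property of the coequalizer $q$ to obtain $\Delta$ with $\delta=\Delta q$, upgrades it to an isomorphism via the finite counting argument $|V(I)|=|V(H)|=|V(G)|$, $|E(I)|=|E(H)|=|E(G)|$ together with Lemmas~\ref{EpiInj} and~\ref{balanced}, and the forward direction builds $\delta$ by composing $q$ with the isomorphisms $I\to H\to G$. Your explicit remark that the edge-deck determines the vertex and edge counts of $G$ is a point the paper leaves implicit, but otherwise the two arguments coincide.
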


By following the ideas outlined in the above discussion, we may establish a similar reformulation for the  Vertex Reconstruction Conjecture.

\begin{conjecture}[Vertex Reconstruction Conjecture]
Let $G$ and $H$ be finite simple graphs with at least $3$ vertices and single vertex-deleted subgraphs $G_i$ and $H_i$ for $i\in I$ for some indexing set $I$, where there are isomorphisms $\gamma_i:G_i \to H_i$ for all $i\in I$, then $G$ is isomorphic to $H$.
\end{conjecture}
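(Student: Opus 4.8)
The plan is to transplant the edge-reconstruction argument wholesale to the vertex-deleted setting, keeping the genuine goal an honest isomorphism $G\cong H$. First I would form the coproduct $\coprod_{i\in I}G_i$ of the vertex-deleted subgraphs and, using the isomorphisms $\gamma_i:G_i\to H_i$ together with the canonical inclusions $\kappa_i:H_i\to H$, invoke the universal mapping property of the coproduct to obtain $\Gamma:\coprod_{i\in I}G_i\to H$. The first genuine subtask is to verify that $\Gamma$ is an epimorphism in $\mathbf{StGrphs}$, i.e.\ surjective on part sets: each vertex of $H$ survives in all but one of the $H_i$, and each edge $\{u,v\}$ survives in every $H_w$ with $w\notin\{u,v\}$, so the hypothesis of at least three vertices is exactly what guarantees that the vertex-deck jointly covers $P(H)$. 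Hence $\Gamma$ is surjective and so an epimorphism by Lemma~\ref{EpiInj}.

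Next I would run the Fundamental Morphism Theorem on $\Gamma$ in analogy with diagram~(\ref{eq:EdgeFMT}), obtaining the unique isomorphism $h:I\to I^*$. Reproducing the edge argument verbatim, the factorization $\Gamma=q^{*}h'$ of~(\ref{eq:hprimeprime}) exhibits the equalizer $q^{*}:I^{*}\to H$ as both a monomorphism and, since $\Gamma$ is epi, an epimorphism; by Lemma~\ref{balanced} $q^{*}$ is then an isomorphism, so $I\cong I^{*}\cong H$. At this point the coequalizer object $I$ is already a faithful copy of $H$ assembled purely from the deck data $\{G_i,\gamma_i\}$, with no reference to $G$ itself.

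To descend from $I$ to $G$, I would seek an epimorphism $\delta:\coprod_{i\in I}G_i\to G$ satisfying $\delta p_0k=\delta p_1k$. Such a $\delta$ factors through the coequalizer, yielding $\Delta:I\to G$ with $\delta=\Delta q$; then $\Delta$ is an epimorphism because $\delta$ is, and since $G$ is finite with $|V(G)|=|V(I)|=|V(H)|$ and $|E(G)|=|E(I)|=|E(H)|$, the morphism $\Delta$ is a bijection on part sets, hence a monomorphism, hence an isomorphism by Lemmas~\ref{EpiInj} and~\ref{balanced}. Composing would give $G\cong I\cong H$, which is precisely the conjecture.

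The main obstacle — and the point at which candor is required — is manufacturing $\delta$. The only natural candidate is the morphism induced by the subgraph inclusions $G_i\hookrightarrow G$, and verifying $\delta p_0k=\delta p_1k$ amounts to showing that every identification $\Gamma$ forces among parts of $\coprod_{i\in I}G_i$ is respected on the $G$-side as well, i.e.\ that the kernel pair of $\Gamma$ is contained in the kernel pair of $\delta$. This compatibility says exactly that the overlaps dictated by the $H$-deck glue the pieces $G_i$ back into $G$, and it is indistinguishable in difficulty from the Vertex Reconstruction Conjecture itself. Since that conjecture is among the most famous open problems in graph theory, I do not expect this last step to be dischargeable unconditionally; what the plan can honestly deliver is a reduction of the full conjecture to the single equation $\delta p_0k=\delta p_1k$, isolating the entire difficulty in the existence of that one coequalizing epimorphism.
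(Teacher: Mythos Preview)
The statement you were asked to prove is the Vertex Reconstruction Conjecture itself, and the paper does not prove it either: it is a famous open problem, and the paper only establishes the reformulation (Theorem~3.3) that vertex-reconstructability is equivalent to the existence of an epimorphism $\delta:\coprod_{i\in I}G_i\to G$ with $\delta p_0k=\delta p_1k$. You diagnose this correctly in your final paragraph, and the reduction you describe --- $\Gamma$ epi, Fundamental Morphism Theorem applied to $\Gamma$, $q^*$ an isomorphism so $I\cong H$, then $\delta$ factoring through $q$ to give an isomorphism $\Delta:I\to G$ --- is exactly the paper's argument for that reformulation theorem, step for step.

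One small point of divergence: you propose the specific candidate $\delta$ induced by the inclusions $G_i\hookrightarrow G$ and phrase the obstruction as a kernel-pair containment. The paper's theorem is agnostic about which $\delta$ one uses, asking only for the existence of \emph{some} epimorphism satisfying the equation; in the converse direction the paper manufactures $\delta$ not from the inclusions but as $\varphi\psi q$, postcomposing $q$ with the isomorphisms $I\cong H\cong G$. Your inclusion-based $\delta$ is the more natural object to study if one were actually attacking the conjecture, but for the purposes of the equivalence statement the paper's formulation is cleaner. In any case, your honest conclusion --- that the plan delivers only the reduction, not the conjecture --- is exactly right and matches what the paper achieves.
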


Naturally, these $G_i, H_i$ and $\gamma_i$ also gives rise to a $\displaystyle\Gamma:\coprod_{i\in I}G_i\to H$, which in turn allows us to once again apply the Fundamental Morphism Theorem.

\begin{equation} \label{eq:VertexFMT} \xymatrix{R_\Gamma\ar[r]^-k & \displaystyle\coprod_{i\in I}G_i\times \coprod_{i\in I}G_i\ar@<.5ex>[r]^-{p_0}\ar@<-.5ex>[r]_-{p_1} & \displaystyle\coprod_{i\in I}G_i \ar[r]^-\Gamma\ar[d]_-q & H \ar@<.5ex>[r]^-{i_0}\ar@<-.5ex>[r]_-{i_1} & H+H \ar[r]^-{k^*} & R_\Gamma^*\\
 & & I\ar@{-->}[r]_-{\exists !h} & I^*\ar[u]_-{q^*} & & }
\end{equation}

Which leads to a second reformulation of a classical reconstruction conjecture:

\begin{theorem}[Categorical equivalence to Vertex-Reconstructable]
Given the construction in (\ref{eq:VertexFMT}), $G$ is vertex-reconstructable if and only if there exists an epimorphism $\displaystyle \delta:\coprod_{i\in I}G_i \to G$ such that $\delta p_0 k=\delta p_1 k$.
\end{theorem}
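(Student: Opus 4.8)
The plan is to follow the edge-reconstruction argument almost verbatim, since the categorical machinery in (\ref{eq:VertexFMT}) is identical and only the cardinality bookkeeping changes. First I would record, exactly as before, that $\Gamma$ is an epimorphism (it is surjective on part sets because the vertex-deleted subgraphs jointly cover $H$), apply the Fundamental Morphism Theorem to obtain the construction, and then observe that the factorization $\Gamma = q^* h^\prime$ forces $q^*$ to be an epimorphism. Since $q^*$ is an equalizer and hence a monomorphism, Lemma \ref{balanced} makes $q^*$ an isomorphism, so $I$ is isomorphic to $H$. This identifies $I$ as a concrete stand-in for $H$ and is where all of the reconstruction data enters the argument.

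For the forward direction, suppose an epimorphism $\delta:\coprod_{i\in I}G_i \to G$ with $\delta p_0 k = \delta p_1 k$ exists. Since $q = coeq(p_0 k, p_1 k)$, the universal mapping property of the coequalizer yields a unique $\Delta:I\to G$ with $\delta = \Delta q$; because $\delta$ is an epimorphism, so is $\Delta$, and hence $\Delta$ is surjective on part sets by Lemma \ref{EpiInj}. To upgrade surjectivity to an isomorphism I would show that $\Delta$ is a bijection on parts by a cardinality comparison and then invoke Lemmas \ref{EpiInj} and \ref{balanced}. For the converse, if $G$ is vertex-reconstructable then $G \cong H \cong I$, and composing $q$ (which already satisfies $q p_0 k = q p_1 k$ as a coequalizer) with these isomorphisms produces the required $\delta$; this half is purely formal.

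The main obstacle, and really the only place the vertex case differs from the edge case, is justifying the cardinality equalities $|V(I)| = |V(G)|$ and $|E(I)| = |E(G)|$ needed to conclude that $\Delta$ is bijective. Since $I \cong H$, it suffices to show $|V(G)| = |V(H)|$ and $|E(G)| = |E(H)|$, and here one cannot simply reuse the edge-deck bookkeeping, because vertex deletion removes both a vertex and several incident edges. The vertex count is immediate: the common index set $I$ has size $n$ and each card has $n-1$ vertices, so both $G$ and $H$ have $n$ vertices. The edge count requires Kelly's counting argument: each edge of $G$ survives in exactly $n-2$ of the vertex-deleted subgraphs, so $\sum_{i\in I}|E(G_i)| = (n-2)|E(G)|$, and the same identity for $H$ together with $G_i \cong H_i$ gives $|E(G)| = |E(H)|$, using that $n \geq 3$ so that $n-2 \geq 1$ and the sum determines $|E(G)|$. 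With both cardinalities matched, $\Delta$ is a bijection on part sets and the argument closes exactly as in the edge-reconstruction case.
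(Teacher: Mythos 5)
Your proposal follows the paper's proof essentially step for step: establish that $\Gamma$ is an epimorphism, deduce $I\cong H$ from $\Gamma=q^*h^\prime$ and Lemma \ref{balanced}, obtain $\Delta$ from the coequalizer's universal property and upgrade it to an isomorphism by a finite cardinality count, and build $\delta$ from $q$ and the isomorphisms in the converse direction. The only difference is that you explicitly justify $|V(G)|=|V(H)|$ and $|E(G)|=|E(H)|$ via Kelly's counting argument, a detail the paper asserts without proof; this is a worthwhile addition but not a different approach.
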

\begin{proof}
We first note that $\Gamma$ is an epimorphism.  To see this, consider that for each $G_i$ there is a map $\gamma_i\kappa_i:G_i\to H$ via the canonical isomorphism $\gamma_i$ and inclusion $\kappa_i$.  So for any vertex $v\in H$, there is an $H_i$ such that $v\in V(\kappa_i(H_i))$.  So it follows that  $V(\Gamma(\displaystyle\coprod_{i\in I}G_i))=V(H)$.  Similarly, given $uv\in E(H)$, there is an $H_j$ such that $u,v\in V(\kappa_j(H_j))$ since each $H_j$ is isomorphic to a single vertex-deleted subgraph of $H$.  So $uv\in E(\kappa_j(H_j))$ and $E(\Gamma(\displaystyle\coprod_{i\in I}G_i))=E(H)$.  Thus $\Gamma$ is an epimorphism.

This allows us to use the same arguments in the establishment of the reformulation of the Edge Reconstruction Conjecture.

Using the construction (\ref{eq:hprimeprime}) in the proof of the Fundamental Morphism Theorem and as $\Gamma$ is an epimorphism with $\Gamma=q^* h^\prime$, $q^*:I^*\to H$ is an epimorphism. Then by Lemma \ref{balanced} as $q^*$ is a monomorphism (it is an equalizer), it is an isomorphism. Hence $I$ is isomorphic to $H$.\par
If there exists an epimorphism $\displaystyle \delta:\coprod_{i\in I} G_i\rightarrow G$ such that $\delta p_0 k = \delta p_1 k$, then the universal mapping property of $I$ would yield a unique morphism $\Delta:I\rightarrow G$ where $\delta=\Delta q$. Then $\Delta$ must be an epimorphism as $\delta$ is. As $|V(I)|=|V(H)|=|V(G)|$ and $|E(I)|=|E(H)|=|E(G)|$ both finite, $\Delta$ is a bijection. Therefore by Lemma \ref{EpiInj} and Lemma \ref{balanced} $\Delta$ is an isomorphism, and $G$ is isomorphic to $H$.\par
Conversely, if G is vertex reconstructable, then there is an isomorphism $\varphi:H\to G$.  The above discussion also gives us an isomorphism $\psi:I\to H$.  So by defining $\delta=\varphi\psi q$ we find a $\delta$ such that $\delta p_0 k=\delta p_1 k$, since $q=eq(p_0k, p_1k)$.

\end{proof}

\section{Conclusion}
The Fundamental Morphism Theorem is a categorical extension of the Noether Isomorphism Theorem(s), which has had far-reaching consequences in the theory of Groups, Rings, and other algebraic objects, especially regarding questions of isomorphisms and homomorphisms.  It is natural then to see that by extending this result to graph categories, that we see an immediate application to long standing conjectures regarding graph isomorphisms.  As many classical graph theoretic notions and associated problems are reformulated in terms of graph homomorphisms, the authors expect that application of the Fundamental Morphism Theorem will key part of understanding and solving these problems.

\bibliographystyle{abbrv}
\bibliography{FMT}

\end{document}